\newtheorem{theorem}{Theorem}[section]
\newtheorem*{theorem*}{Theorem}
\newtheorem{lemma}[theorem]{Lemma}
\newtheorem*{lemma*}{Lemma}
\newtheorem*{proposition*}{Proposition}
\newtheorem{corollary}[theorem]{Corollary}
\newtheorem*{theorem0}{Theorem 0}
\renewcommand{\@thesubfigure}{\hskip\subfiglabelskip}
\theoremstyle{plain}
\def\bC{\mathbb{C}}
\def\bD{\mathbb{D}}
\def\bN{\mathbb{N}}
\def\bQ{\mathbb{Q}}
\def\bR{\mathbb{R}}
\def\cB{\mathcal{B}}
\def\cE{\mathcal{E}}
\def\cF{\mathcal{F}}
\def\cH{\mathcal{H}}
\def\cP{\mathcal{P}}
\def\fR{\mathfrak{R}}
\def\Re{\text{Re}}
\def\i{\mathfrak{i}}
\def\<{\langle}
\def\>{\rangle}
\def\gcd{\textsf{gcd}}
\begin{document}

\begin{center}
\large On a Hilbert Space Reformulation of Riemann Hypothesis
\end{center}
\bigskip
%\begin{center}
%Ge's group
%\end{center}

\begin{center}
Boqing Xue\footnote{Institute of Mathematical Sciences, ShanghaiTech University, Shanghai, China, 201210. Email: xuebq@shanghaitech.edu.cn}
\end{center}

\bigskip

\begin{quote}
{\footnotesize {
\begin{center}
\scshape Abstract
\end{center}
}
 \medskip

We explore Hilbert space reformulations of Riemann Hypothesis developed by Nyman, Beurling, B\'{a}ez-Duarte, et. al. with a weighted Bergman space $\cH=A_1^2(\bD)$, i.e., Riemann hypothesis holds if and only if the Hilbert subspace $\cH_0$ spanned by a certain family of functions coincides with $\cH$. A condition that a function does not belong to $\cH_0^\bot$ is given. Moreover, it is proved that the von-Neumann algebra generated by a certain monoid $T_\bN=\{T_k:\, k\in \bN\}$ of operators is exactly $B(\cH)$. As a result, Riemann hypothesis is true if and only if $\cH_0$ is $T_k^\ast$-invariant for all $k\in \bN$.
}
\end{quote}

%11N99 20F05 20F38 20F65

%\tableofcontents

\bigskip

\section{Introduction}

A key of understanding the natural numbers $\bN=\{1,2,3,\ldots\}$ is to study the primes $\cP=\{2,3,5,7,11,\ldots\}$. Riemann \cite{Riemann} studied the multiplicative structure of natural numbers by a complex-valued function $\zeta(s)=\sum\nolimits_{n=1}^\infty \frac{1}{n^s}$ $(\Re(s)>1)$, which can be extended to a meromorphic function on the whole complex plane. In classical statistical physics, the structure of a system is usually described by a partition function, whose logarithmic derivative tells the average of energy. Correspondingly, the logarithm derivative of Riemann $\zeta$-function, i.e.,
\[
\frac{d}{ds}\log \zeta(s) = \frac{\zeta^\prime(s)}{\zeta(s)}=-\sum\limits_{n=1}^\infty\frac{\Lambda(n)}{n^s},
\]
tells us the average information of the number of primes. Here $\Lambda$ is called von Mangoldt function and can be viewed as a weighted characteristic function of the primes.

More concretely, let us denote by $\rho$ the non-trivial zeros of $\zeta(s)$. Also let $\Lambda(x)=0$ when $x\notin \bN$. Then the sum of weighted number of primes up to $x$ is shown by the Riemann-von Mangoldt explicit formula
\[
\sum\limits_{n\leq x}\Lambda(n)+\frac{1}{2}\Lambda(x)=
x-\sum\limits_{\rho}\frac{x^\rho}{\rho} -\frac{1}{2}\log
(1-x^{-2})-\log 2\pi.
\]
Note that the modulus of $x^\rho$ is bounded by $x^{\Re\rho}$. After some efforts, one can deduce that
\[
\sum\limits_{n\leq x}\Lambda(n) = x + \textit{O}\left(x^{\delta+\varepsilon}\right)
\]
for any $\varepsilon>0$, where $\delta=\sup\nolimits_{\rho}\Re(\rho)$. Riemann hypothesis (see \cite{Bom06,GX18} for surveys), which is still open, says that all the non-trivial zeros of $\zeta(s)$ lie on the vertical line with real part $1/2$. Or equivalently, the prime distribution should satisfy $\sum\nolimits_{n\leq x}\Lambda(n) = x+\textit{O}(x^{1/2+\varepsilon})$ for any $\varepsilon>0$.

In 1896, Hadamard \cite{Hadamard} and de la Vall\'{e}e-Poussin \cite{VPoussin} proved independently that $\zeta(s)$ is non-zero for $\Re(s)\geq 1$, which implies the prime number theorem $\sum\nolimits_{n\leq x}\Lambda(n) \sim x$. Till now, people only have knowledge of the non-existence of zeros in the region that is ``very close'' to the left of the vertical line $\Re(s)=1$ (see \cite{Vin} for example).

In quantum physics, observables are described by operators on Hilbert spaces instead of functions. Inspired by physical points of views, Hilbert-P\'{o}lya conjecture is regarded as a promising approach for studying Riemann hypothesis. This conjecture says that the imaginary parts of the zeros of $\zeta(s)$ are eigenvalues of a self-adjoint operator $D$. Since the spectrum of a self-adjoint (possibly unbounded and densely defined) operator on a Hilbert space is contained in $\bR$, then $\i(\rho-1/2)\in \bR$ implies that $\Re(\rho)=1/2$, where $\rho$ is any non-trivial zero of $\zeta(s)$.

Differential operators usually serve as infinitesimal operators associated to underlying structures. And they are always self-adjoint (after multiplying the imaginary unit $\i$ in some situations). So they are suitable candidates for Hilbert-P\'{o}lya conjecture. In \cite{Con99}, Connes showed a beautiful construction with a differential operator $D$ on $L^2_w(C_\bQ)$, where $C_\bQ$ is the id\'{e}l\`{e} class group of the rational numbers $\bQ$. For some technical reasons (to ensure that the eigenfunctions lie in the Hilbert space), Connes added a weight $w$ to the Haar measure of $C_\bQ$. Due to this weight, it is a pity that the differential operator $D$ is not self-adjoint anymore. However, Connes showed a trace formula, whose global validity is also equivalent to generalized Riemann hypothesis.

In a sequence of papers \cite{Ge01,Mey04,Li20,GLWX}, people studied the multiplicative differential operator $x \frac{d}{dx}$ defined on several Schwartz-type spaces on $(0,+\infty)$. In \cite{GLWX}, it is proved that the eigenvalues of $x \frac{d}{dx}$ coincides with the non-trivial zeros of $\zeta(s)$, and their corresponding multiplicities are same. However, it is a pity that the concept of self-adjoint operator on general topological vector spaces is not mature at present. Also see \cite{BK99-1,BK99-2,BK11} for some of the constructions related to Hilbert-P\'{o}lya conjecture. Moreover, a discrete-type `differential operator' induced by a convolution of the M\"{o}bius function was studied in \cite{DHX}, which is also related to the Riemann $\zeta$-function.

Another approach for studying Riemann hypothesis is to consider Hilbert space reformulations originated by Nyman \cite{Nym50} and developed by Beurling \cite{Beu55}, et. al. Each reformulation involves a Hilbert space $H$, a family $F$ of functions, and a function $\beta_0$ in $H$. Let $H_0$ be the Hilbert subspace spanned by $F$ in $H$. The Hilbert space reformulation of Riemann hypothesis is stated as the following:

\begin{center}
\textsl{Riemann hypothesis holds if and only if $H_0=H$, if and only if $\beta_0\in H_0$.}
\end{center}

For $0<\lambda\leq 1$, Nyman and Beurling considered the family of functions $F=\{\rho_\lambda:\, x\in (0,1]\}$ in the Hilbert space $H=L^2(0,1]$, where
\[
\rho_\lambda(x) =\left\{\frac{\lambda}{x}\right\} - \lambda \left\{\frac{1}{x}\right\}.
\]
Here $\{y\}$ means the fractional part of a real number $y$. Their choice of $\beta_0$ is the constant function $1$ on $(0,1]$. Moreover, Beurling \cite{Beu55} also proved that $\zeta(s)$ is free from zero in the right half-plane $\Re(s)>1/p$ ($1<p<\infty$) if and only if $\text{span}(F)$ is dense in $L^p(0,1]$.

In \cite{BD03}, Baez-Duarte showed a stronger version. The uncountable family $\{\rho_\lambda:\, x\in (0,1]\}$ was replaced by a countable family $F=\{\rho_{1/k}:\, k=2,3,4,\ldots\}$. Baez-Duarte's choice of $H$ consists of functions in $L^2(0,1]$ which are almost everywhere constant on each of the sub-intervals $\left(\frac{1}{n+1},\frac{1}{n}\right]$ $(n=1,2,3,\ldots)$. The function $\beta_0$ is still the constant function $1$.

Applying a unitary equivalence of Hilbert spaces, Bagchi \cite{Bag06} gave a discrete version of reformulation, with $H=l^2(\bN,\nu)$, $\beta_0=1$ and $H_0$ spanned by $F=\{r_k:\, k=2,3,\ldots\}$. Here $\nu$ is the probability measure on $\bN$ induced by $\nu(\{n\})=\frac{1}{n(n+1)}$, and $r_k(n) = \left\{\frac{n}{k}\right\}$ $(n\in \bN)$. In fact, it is not hard to prove that $\beta_0$ can be replaced by any function such that (letting $\beta_0(0)=0$)
\[
\sum\limits_{n=1}^\infty \frac{\beta_0(n)-\beta_0(n-1)}{n^s}
\]
can be defined and is non-zero in the vertical strip $1/2<\Re(s)<1$. To approximate $\beta_0$ by combinations of functions $r_k$ $(k=2,3,\ldots)$, the best possible coefficients are related to the M\"{o}bius function $\mu$. We refer to \cite{BS1,BS2,BS3,BS4,Wei07} for details.

Recently, Waleed Noor \cite{Noo19}  showed another reformulation with $H=H^2(\bD)$, the hardy space on the open unit disk $\bD=\{z\in \bC:\, |z|<1\}$. The Hilbert subspace $H_0$ is spanned by the family $F=\{h_k:\, k=2,3,\ldots\}$, and $\beta=1$. Here $h_k(z) = \frac{1}{1-z} \log \left(\frac{1}{k}(1+z+z^2+\ldots+z^{k-1})\right)$ $(z\in \bD)$.

Waleed Noor made use of a weighted Bergman space as a bridge between $l^2(\bN,\nu)$ and Hardy space. For a complex number $z$, we use both the expressions $z=x+\i y$ for some $x,y\in \bR$ and $z=re^{i\theta}$ for some $r\geq 0$, $0\leq \theta<2\pi$. The normalized area measure on $\bD$ is denoted by $dA$, i.e., $dA=\pi^{-1}dxdy=\pi^{-1}rdrd\theta$. And let $dA_1$ be the probability measure
\[
dA_1(z)=2(1-|z|^2)dA(z)=2\pi^{-1}r(1-r^2)drd\theta.
\]
on $\bD$. %(Comparing with the measure defined in \cite{HKZ}, we discard a factor $2$ for simplicity.)
Then $L^2(\bD,dA_1)$ forms a Hilbert space with norm
\[
\|f\|_2 = \left(\int_{\bD} |f(z)|^2 dA_1(z)\right)^{1/2}.
\]
The weighted Bergman space $A_1^2(\bD)$ consists of all the analytic functions in $L^2(\bD,dA_1)$. Write $\bN_0=\{0,1,2,\ldots\}$. Then $A_1^2(\bD)$ has an orthogonal basis $\{z^m:\, m\in \bN_0\}$. In particular, any function $f$ in $A_1^2(\bD)$ is a power series $f(z)=\sum\nolimits_{n=0}^\infty a_{n}z^n$ for $z\in \bD$. The norm of $f$ in $A_1^2(\bD)$ equals
\begin{equation} \label{eq_norm_Bergman}
\|f\|_2 = \left(\sum\limits_{n=0}^\infty \frac{2|a_{n}|^2}{(n+1)(n+2)}\right)^{1/2}.
\end{equation}
We refer to \cite{HKZ} for details.

Define $\Psi: \, l^2(\bN,\nu)\rightarrow A_1^2(\bD)$ by $(\Psi f)(z) = \frac{1}{\sqrt{2}}\sum\nolimits_{n=0}^\infty f(n+1)z^n$. It can be verified that $\Psi$ is an isometric isomorphism, and $\Psi r_k = \frac{1}{\sqrt{2}}s_k$ ($k=2,3,\ldots$), where
\[
s_k(z)= \frac{1}{1-z}\frac{d}{dz}\log \left(\sum\limits_{j=0}^{k-1}z^j\right) = \frac{1+2z+\ldots + (k-1)z^{k-2}}{1-z^k}.
\]
These functions seem interesting, since they look like weighted logarithmic derivatives of partition functions that carry information on natural numbers $k=2,3,4,\ldots$. Moreover, one also has $\Psi 1=\frac{1}{\sqrt{2}}\frac{1}{1-z}$. In the rest of this paper, we take $\cH=A_1^2(\bD)$, $\beta(z)=\frac{1}{1-z}$ and let $\cH_0$ be the Hilbert subspace spanned by $\cF=\{s_k:\, k=2,3,\ldots\}$. Now we restate a Hilbert space reformulation as below.

\begin{theorem0}
Riemann hypothesis holds if and only if $\cH_0=\cH$, if and only if $\beta\in \cH_0$.
\end{theorem0}

Now it is natural to ask the following questions. %(\uppercase\expandafter{\romannumeral1}) Which kind of functions belong to $\cH_0$?
(\uppercase\expandafter{\romannumeral1}) Which kind of functions does not belong to $\cH_0^\bot$? Here $\cH_0^\bot$ is the orthogonal complement of $\cH_0$ in $\cH$. (\uppercase\expandafter{\romannumeral2}) For which kind of families $\cE$ of functions, the subspace $\text{Span}(\cE)$ is dense in $\cH$? Note that $\cH_0=\cH$ is equivalent to $\cH_0^\bot=\{0\}$. For question (\uppercase\expandafter{\romannumeral1}), we hope to find as many functions as we can. For any acquired family $\cE$ in question (\uppercase\expandafter{\romannumeral2}), the remaining problem becomes that whether functions in $\cE$ can be approximated by finite combinations of the functions $s_k$ $(k=2,3,\ldots)$. %Theorem 9 and 12 of Noor are results towards question (\uppercase\expandafter{\romannumeral3}) and (\uppercase\expandafter{\romannumeral2}),respectively.
In this paper, we obtain a few partial results to these questions.

For $k=2,3,\ldots$, let
\[
f_k(z)=(1-z)s_k(z)=\frac{d}{dz}\log \left(\sum\limits_{j=0}^{k-1}z^j\right),\quad (z\in \bD).
\]
\begin{theorem}  \label{thm_spanfk=A_1^2}
The subspace $\text{Span}\{f_k:k=2,3,\ldots\}$ is dense in $\cH$.
\end{theorem}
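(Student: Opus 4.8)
The plan is to verify the equivalent assertion that the orthogonal complement of $\overline{\text{Span}\{f_k:\,k=2,3,\ldots\}}$ in $\cH$ is trivial. So I would take $g(z)=\sum_{n=0}^\infty b_nz^n\in\cH$ with $\langle g,f_k\rangle=0$ for every $k\geq 2$ and aim to show $g=0$. The first step is to record the Taylor coefficients of $f_k$: expanding $\tfrac{1}{1-z}=\sum_{n\geq0}z^n$ and $\tfrac{kz^{k-1}}{1-z^k}=k\sum_{j\geq1}z^{jk-1}$ in
\[
f_k(z)=\frac{1}{1-z}-\frac{kz^{k-1}}{1-z^k}
\]
gives $f_k(z)=\sum_{n\geq0}\bigl(1-k\,\mathbbm{1}_{k\mid(n+1)}\bigr)z^n$. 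Thus, after the reindexing $m=n+1$, the coefficient attached to $z^{m-1}$ is $1-k\,\mathbbm{1}_{k\mid m}$.

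Next I would translate the orthogonality relations through the inner product read off from \eqref{eq_norm_Bergman}. Setting $e_m=\tfrac{2b_{m-1}}{m(m+1)}$, $S=\sum_{m\geq1}e_m$, and $E(k)=\sum_{m:\,k\mid m}e_m=\sum_{j\geq1}e_{jk}$, each relation $\langle g,f_k\rangle=0$ collapses to
\[
S-kE(k)=0,\qquad\text{i.e.}\qquad E(k)=\frac{S}{k}\quad(k\geq 2).
\]
That $S$ and each $E(k)$ converge absolutely is immediate: since $g\in A_1^2(\bD)$ we have $\sum_m\tfrac{|b_{m-1}|^2}{m(m+1)}<\infty$, and pairing this against $\sum_m\tfrac{1}{m(m+1)}<\infty$ by Cauchy--Schwarz gives $\sum_m|e_m|<\infty$.

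The crux is a divisor-weighted refinement of this bound. Writing $\tau(m)$ for the number of divisors of $m$ and using $\tau(m)=O(m^{\varepsilon})$ together with Cauchy--Schwarz, the Bergman-space membership of $g$ yields $\sum_{m\geq1}\tau(m)\,|e_m|<\infty$; this is the precise point where the weight $\tfrac{1}{(n+1)(n+2)}$ is indispensable, as plain square-summability of $(b_n)$ would not force it. From here the argument is quick. Interchanging the order of summation,
\[
\sum_{k\geq2}|E(k)|\leq\sum_{m\geq1}\bigl(\tau(m)-1\bigr)|e_m|<\infty,
\]
whereas $\sum_{k\geq2}|E(k)|=|S|\sum_{k\geq2}\tfrac{1}{k}$. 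Since the harmonic series diverges, this is consistent only if $S=0$, whence $E(k)=0$ for all $k\geq1$ (the value $E(1)=S$ vanishing as well).

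Finally, the same divisor-weighted estimate legitimizes Möbius inversion of $E(k)=\sum_{j\geq1}e_{jk}$: for each fixed $m$ the bound above forces $\sum_{l}\tau(l)|e_{lm}|<\infty$, so the double series $\sum_{d,j}\mu(d)\,e_{jdm}$ converges absolutely, and grouping terms by $l=dj$ and using $\sum_{d\mid l}\mu(d)=\mathbbm{1}_{l=1}$ yields $e_m=\sum_{d\geq1}\mu(d)\,E(dm)$ for every $m\geq1$. As $E\equiv0$, every $e_m=0$, hence $b_{m-1}=0$ for all $m\geq1$ and $g=0$, which proves density. I expect the main obstacle to be analytic rather than algebraic: the whole scheme hinges on the estimate $\sum_m\tau(m)|e_m|<\infty$ and the absolute convergence it provides, which simultaneously forces $S=0$ and justifies the infinite Möbius inversion. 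It is exactly here that the weighted Bergman structure, as opposed to a generic $\ell^2$ hypothesis, does the essential work.
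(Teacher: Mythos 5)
Your proof is correct, but it takes a genuinely different route from the paper's. The paper argues directly: it passes to the antiderivatives $F_k(z)=\log\frac{1-z^k}{1-z}$, proves a transfer lemma (convergence in $H^2$ of antiderivatives implies convergence in $\cH$ of derivatives), and then shows each monomial $z^m$ lies in the $H^2$-closure of $\mathrm{Span}\{F_k\}$ by exhibiting the explicit approximants $\sum_{k\le K}\frac{\mu(k/m)\delta_{m\mid k}}{k/m}F_k$; the resulting error is bounded by $\big|\sum_{k'\le K/m}\mu(k')/k'\big|^2+m^2K^{-1/2}$, so the paper's convergence rests on $\sum_{k'\le x}\mu(k')/k'\to 0$, a statement equivalent to the prime number theorem. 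You instead annihilate the orthogonal complement: the relations $\langle g,f_k\rangle=0$ become $S=kE(k)$, and the tension between the divergence of $\sum_{k\ge 2}1/k$ and the absolute convergence of $\sum_{k\ge 2}|E(k)|$ --- secured by the divisor-weighted bound $\sum_m\tau(m)|e_m|<\infty$, which is exactly where the Bergman weight $\frac{1}{(n+1)(n+2)}$ is used --- forces $S=0$; the M\"{o}bius function then enters only through the elementary identity $\sum_{d\mid l}\mu(d)=\delta_{l=1}$, with the interchange of summation justified by the same absolute convergence. Each step checks out (the coefficient formula $1-k\,\delta_{k\mid(n+1)}$ for $f_k$, the term-by-term evaluation of the inner products, the counting $\#\{(k,j):kj=m,\ k\ge 2\}=\tau(m)-1$), so there is no gap. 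What each approach buys: yours is more elementary and self-contained, needing only $\tau(m)\ll m^{\varepsilon}$ and Cauchy--Schwarz and avoiding the prime number theorem entirely; the paper's is constructive, producing explicit approximating combinations whose M\"{o}bius-weighted coefficients are the ones that recur throughout the Nyman--Beurling--B\'{a}ez-Duarte circle of ideas, and its transfer lemma between $H^2$ and $\cH$ is reused elsewhere. Your duality strategy is in fact closer in spirit to the paper's proof of Theorem \ref{thm_notin_Hbot} than to its proof of Theorem \ref{thm_spanfk=A_1^2}.
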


\begin{theorem} \label{thm_notin_Hbot}
If $g$ is a non-zero function in $\cH$ such that $\frac{g(z)}{1-z}\in \cH$, then $g\notin \cH_0^\bot$.
\end{theorem}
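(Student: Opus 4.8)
The plan is to convert the orthogonality conditions $\langle g,s_k\rangle=0$ into a statement about divisor sums of a single sequence attached to $g$, and then invert. Throughout write $g(z)=\sum_{n\ge0}a_nz^n$ and $h(z)=g(z)/(1-z)=\sum_{n\ge0}c_nz^n$, so that $c_n=a_0+\cdots+a_n$; the hypothesis $g/(1-z)\in\cH$ is precisely that $\sum_{n\ge0}\frac{|c_n|^2}{(n+1)(n+2)}<\infty$. The first step is to record that the coefficient of $z^n$ in $s_k$ equals $(n+1)\bmod k$, so by the norm formula \eqref{eq_norm_Bergman},
\[
\langle g,s_k\rangle=\sum_{m\ge1}b_m\,(m\bmod k),\qquad b_m:=\frac{2a_{m-1}}{m(m+1)} .
\]
For any $g\in\cH$ one has $\sum_m|b_m|<\infty$ by Cauchy--Schwarz, so the tails $S(n):=\sum_{m\ge n}b_m$ are well defined and tend to $0$.

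Next I would Abel-sum the displayed identity, using $b_m=S(m)-S(m+1)$ together with the elementary fact that $(m\bmod k)-((m-1)\bmod k)$ equals $1$ when $k\nmid m$ and $1-k$ when $k\mid m$. This yields the clean formula
\[
\langle g,s_k\rangle=\Sigma-k\,Q(k),\qquad \Sigma:=\sum_{m\ge1}S(m),\quad Q(k):=\sum_{k\mid m}S(m)=\sum_{l\ge1}S(lk).
\]
Assuming $g\in\cH_0^\bot$, the left side vanishes for every $k\ge2$; combined with the trivial identity $Q(1)=\Sigma$ this gives $Q(k)=\Sigma/k$ for all $k\ge1$.

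The crux is to show that the hypothesis forces $\sum_{k\ge1}|Q(k)|<\infty$. For this I would substitute $a_{m-1}=c_{m-1}-c_{m-2}$ into $b_m$ and sum by parts once more to obtain the closed form
\[
S(n)=-\frac{2c_{n-2}}{n(n+1)}+4\sum_{m\ge n}\frac{c_{m-1}}{m(m+1)(m+2)} .
\]
Two applications of Cauchy--Schwarz against the weight $\frac{2}{(n+1)(n+2)}$ then give $\sum_{m\ge1}|S(m)|\,m^{\varepsilon}<\infty$ for every $\varepsilon\in(0,\tfrac12)$; since the number-of-divisors function satisfies $d(m)\ll_\varepsilon m^{\varepsilon}$, it follows that
\[
\sum_{k\ge1}|Q(k)|\le\sum_{m\ge1}|S(m)|\,d(m)<\infty .
\]
Comparing this with $Q(k)=\Sigma/k$ and the divergence of $\sum_k 1/k$ forces $\Sigma=0$, hence $Q\equiv0$. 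Finally, the Möbius inversion $S(k)=\sum_{d\ge1}\mu(d)Q(dk)$, which is legitimate because the double series converges absolutely by the bound just obtained, gives $S\equiv0$; therefore $b_m=S(m)-S(m+1)=0$ for all $m$, i.e. $g=0$, contradicting $g\ne0$.

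I expect the main obstacle to be the weighted summability estimate $\sum_m|S(m)|\,m^{\varepsilon}<\infty$: this is exactly the point at which the hypothesis $g/(1-z)\in\cH$ is used, and it genuinely fails for a general $g\in\cH$, since the weight controls the partial sums $c_n$ rather than the coefficients $a_n$ themselves. Everything downstream---promoting a single summability bound to $\Sigma=0$ and then to $S\equiv0$---is soft, and it is worth noting that this route appears to sidestep any appeal to $\sum_d\mu(d)/d=0$, which the proof of Theorem~\ref{thm_spanfk=A_1^2} would otherwise suggest is required.
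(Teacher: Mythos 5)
Your proof is correct in substance, but it takes a genuinely different route from the paper's. The paper argues as follows: since $|1-\overline z|=|1-z|$, the hypothesis $g/(1-z)\in\cH$ puts the (non-analytic) function $g(z)/(1-\overline z)$ in $L^2(\bD,dA_1)$; the identity $\langle f_k,\, g/(1-\overline z)\rangle=\langle (1-z)s_k,\, g/(1-\overline z)\rangle=\langle s_k,g\rangle=0$ together with Theorem \ref{thm_spanfk=A_1^2} then shows $g/(1-\overline z)$ is orthogonal to every monomial $z^v$, which after expanding $1/(1-\overline z)$ yields the tail conditions $\sum_{m\ge v}a_m/((m+1)(m+2))=0$ for all $v$, forcing $g=0$. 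Your argument instead works purely on Taylor coefficients: Abel summation converts $\langle g,s_k\rangle=0$ into $\Sigma=kQ(k)$, the hypothesis on $g/(1-z)$ gives the weighted summability $\sum_m|S(m)|m^\varepsilon<\infty$, and then $\sum_k|Q(k)|<\infty$ versus $Q(k)=\Sigma/k$ kills $\Sigma$, after which absolutely convergent M\"obius inversion gives $S\equiv0$ and $g=0$. What your route buys is independence from Theorem \ref{thm_spanfk=A_1^2}: the paper's proof of that theorem uses $\sum_{k\le K}\mu(k)/k\to0$, i.e.\ the prime number theorem, whereas your M\"obius inversion needs only absolute convergence, so your proof of this statement is strictly more elementary. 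What the paper's route buys is brevity, since the density theorem is already in hand. Two small points to tidy: the coefficient of $z^n$ in $s_k$ is $\{(n+1)/k\}=((n+1)\bmod k)/k$, not $(n+1)\bmod k$ (harmless, as the missing factor $1/k$ cancels against the orthogonality condition); and the identity $\langle g,s_k\rangle=\Sigma-kQ(k)$ should only be split into the two separate series \emph{after} the estimate $\sum_m|S(m)|m^\varepsilon<\infty$ is established, since for a general $g\in\cH$ the series $\Sigma$ and $Q(k)$ need not converge individually --- before that point you only know that the combined partial sums $\sum_{m\le M}S(m)-k\sum_{k\mid m,\,m\le M}S(m)$ converge.
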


The proof of the Hilbert reformulation of Riemann hypothesis of Nyman and Beurling involved a monoid of operators $\{T_\lambda:\, \lambda\in (0,1]\}$ (see \cite{Beu55}). Waleed Noor \cite{Noo19} also showed that the Riemann hypothesis is true if and only if $\cH_0$ contains a cyclic vector for a certain monoid of operators $\{W_n:\, n\in \bN\}$. Instead of considering a single operator (such as the differential operators for Hilbert-P\'{o}lya conjecture), the structure of the multiplicative monoid $(\bN,\cdot)$ can also be studied by monoids of operators.

In \cite{DHX}, Dong, Huang and the author considered the left regular action of $\bN$ on $l^2(\bN)$. More concretely, for $k\in \bN$, let $L_k$ be the operator defined by
\[
(L_k f)(n) =
\begin{cases}
f(n/k),\quad &\text{if }k|n,\\
0,\quad &\text{otherwise}.
\end{cases}
\]
Then the multiplicative monoid of operators $L_\bN=\{L_k:\, k\in \bN\}$ shares the same structure with $\bN$. In particular, certain operator algebras generated by $L_\bN$ can reflect properties of natural numbers. The following results are shown in \cite{DHX}:
(\romannumeral1) The maximal ideal space of the Banach algebra generated by $L_\bN$ in $B(l^2(\bN))$ is homeomorphic to $\bD^\cP$.
(\romannumeral2) The $C^\ast$-algebra generated by $L_\bN$ in $B(l^2(\bN))$ contains no projections of finite rank other than $0$.
(\romannumeral3) The von Neumann algebra generated by $L_\bN$ in $B(l^2(\bN))$ is exactly $B(l^2(\bN))$.
In particular, statement (\romannumeral1) reflects the fundamental theorem of arithmetic. And statement (\romannumeral2) is equivalent to the existence of infinitely many primes.

Now let us turn back to the Hilbert spaces $\cH$ and $\cH_0$. Note that $\cH_0=\cH$ if and only if $\cH_0$ is an invariant subspace of all operators in $B(\cH)$. This fact inspires us to consider the following approach for Riemann Hypothesis: first, to find a family $\{T_\omega\}_{\omega\in \Omega}$ of bounded operators on $\cH$ such that $\cH_0$ is $T_w$-invariant for all $\omega\in \Omega$; second, to show that the weak-operator closed algebra generated by $\{T_\omega\}_{\omega\in \Omega}$ in $B(\cH)$ is exactly $B(\cH)$. %In this paper, we show some partial results to this approach.

For $k\in \bN$, define the operators $T_k$ on $\cH$ by
\begin{equation} \label{eq_Tk_def}
(T_k f)(z) = k^{1/2}z^{k-1}f(z^k)\frac{1-z^k}{1-z},\quad (z\in \bD,\, f\in \cH).
\end{equation}
%Note that $\cH=A_1^2(\bD)$ has an orthonormal basis $\{z^m:\, m=0,1,2,\ldots\}$. For $k\in \bN$, define the operators $T_k$ on $\cH$ by
%\[
%T_k (z^m) = k^{1/2}z^{km+k-1}\sum\limits_{l=0}^{k-1} z^{l} = \frac{k^{1/2}z^{km+k-1}(1-z^k)}{1-z}, \quad (m\in \bN_0, \, z\in \bD).
%\]
Then $T_1=I$ is the identity operator on $\cH$. %For $j,k\in \bN$, we have
%\begin{align*}
%(T_jT_k) (z^m) &= k^{1/2} \sum\limits_{l=0}^{k-1} T_j(z^{km+k-1+l}) = (jk)^{1/2}  \sum\limits_{l=0}^{k-1}\sum\limits_{n=0}^{j-1}z^{j(km+k-1+l)+j-1+n} \\
%&= (jk)^{1/2}\sum\limits_{l=0}^{k-1}\sum\limits_{n=0}^{j-1}z^{jkm+jk-1+(jl+n)} = T_{jk}(z^m), \quad (m\in \bN_0, \, z\in \bD).
%\end{align*}
%So $T_jT_k=T_{jk}$ for any $j,k\in \bN$. Let $f(z)=\sum\nolimits_{n=0}^\infty a_nz^n$ be a function in $\cH$. Then
%\[
%(T_k f)(z) = k^{1/2}\sum\limits_{n=0}^\infty a_n z^{kn+k-1}\sum\limits_{l=0}^{k-1}z^l = k^{1/2}\sum\limits_{m=k-1}^\infty a_{\kappa(k;m)}z^m,
%\]
%where $\kappa(k;m)=\lfloor (m+1)/k\rfloor-1 $. So
%\begin{align*}
%\|T_kf\|^2 &= k\sum\limits_{m=k-1}^\infty \frac{2|a_{\kappa(k;m)}|^2}{(m+1)(m+2)} = k\sum\limits_{n=0}^\infty 2|a_n|^2 \sum\limits_{l=kn+k-1}^{kn+2k-2}\frac{1}{(l+1)(l+2)}\\
%&= k\sum\limits_{n=0}^\infty 2|a_n|^2 \cdot \left(\frac{1}{kn+k}-\frac{1}{kn+2k}\right) = \sum\limits_{n=0}^\infty \frac{2|a_n|^2}{(n+1)(n+2)}  = \|f\|^2.
%\end{align*}
%It follows that $\|T_k\|=1$.
It can be verified that $T_\bN:=\{T_k:\, k\in \bN\}$ forms an abelian monoid of bounded operators on $\cH$ (see section \ref{sec_3}). Similar as in \cite{Beu55} and \cite{Noo19}, these operators satisfy the following property.

\begin{lemma} \label{lem_Tk_sm}
For any $k,m\in \bN$ with $m\geq 2$, 
\[
T_k s_m = k^{1/2}\Big(s_{km}-\frac{1}{m}s_k\Big).
\]
In particular, we have $T_k(\cH_0) \subseteq \cH_0$.
\end{lemma}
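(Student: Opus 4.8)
The plan is to deduce everything from a single multiplicative identity among the polynomials $Q_k(z)=\sum_{j=0}^{k-1}z^j=\frac{1-z^k}{1-z}$, namely the factorization
\[
Q_{km}(z)=Q_k(z)\,Q_m(z^k),
\]
which is just the regrouping $\frac{1-z^{km}}{1-z}=\frac{1-z^k}{1-z}\cdot\frac{1-z^{km}}{1-z^k}$. This is the polynomial shadow of the bijection between residues mod $km$ and pairs of residues mod $k$ and mod $m$, and it is precisely the feature that lets the monoid $T_\bN$ encode the multiplicative structure of $\bN$. Since each $s_j$ is (a fixed normalization of) the logarithmic-derivative expression $\frac{1}{1-z}(\log Q_j)'$, the factorization should translate directly into the claimed linear relation among $s_{km}$, $s_k$ and $T_k s_m$.

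First I would take logarithms and differentiate. From $\log Q_{km}(z)=\log Q_k(z)+\log Q_m(z^k)$, the chain rule gives
\[
(\log Q_{km})'(z)=(\log Q_k)'(z)+k\,z^{k-1}\,(\log Q_m)'(z^k).
\]
The first two terms are recognizable as normalized copies of $s_{km}$ and $s_k$ once I multiply through by $\frac{1}{1-z}$, while the last term is where $T_k s_m$ is hiding: comparing with $(T_k s_m)(z)=k^{1/2}z^{k-1}s_m(z^k)\frac{1-z^k}{1-z}$, I would reinstate the factor $\frac{1-z^k}{1-z}$ so that $z^{k-1}(\log Q_m)'(z^k)$ becomes a scalar multiple of $(T_k s_m)(z)$.

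Multiplying the displayed identity by $\frac{1}{1-z}$ and substituting these three interpretations then produces a single linear equation relating $s_{km}$, $s_k$ and $T_k s_m$, which I would solve for $T_k s_m$ to obtain $T_k s_m=k^{1/2}\big(s_{km}-\frac{1}{m}s_k\big)$. The one place demanding genuine care — and the step I expect to be the main obstacle — is the bookkeeping of the $j$-dependent normalizing constants (the powers of $k$, $m$, and $k^{1/2}$) introduced by the chain rule and by the definitions of $s_j$ and $T_k$; as an independent safeguard I would verify the coefficient identity directly on power series, using that the $n$-th Taylor coefficient of $s_j$ is $\{(n+1)/j\}$.

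Finally, the inclusion $T_k(\cH_0)\subseteq\cH_0$ follows formally from the formula. For $m\geq 2$ and any $k\geq 1$ we have $km\geq 2$, so $s_{km}\in\cF\subseteq\cH_0$, while $s_k\in\cF\subseteq\cH_0$ when $k\geq 2$ and $T_1=I$ is trivial (with $s_1=0$). Hence $T_k$ maps the spanning family $\cF$, and therefore all of $\mathrm{Span}(\cF)$, into $\cH_0$. Invoking the boundedness of $T_k$ proved in Section~\ref{sec_3}, continuity then carries the closure $\cH_0=\overline{\mathrm{Span}(\cF)}$ into $\cH_0$, which finishes the argument.
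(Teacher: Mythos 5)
Your proof is correct, but it takes a genuinely different route from the paper. The paper verifies the identity coefficient by coefficient: it expands $s_m(z)=\sum_{n\ge 0}\{(n+1)/m\}z^n$, applies $T_k$ to each monomial, and checks by a case analysis on $n$ modulo $km$ that $s_{km}-\frac{1}{m}s_k$ has the same Taylor coefficients. Your derivation from the factorization $Q_{km}(z)=Q_k(z)\,Q_m(z^k)$ via logarithmic differentiation is more conceptual and explains where the identity comes from, at the cost of having to track normalizations. That bookkeeping is indeed the crux, and you are right to flag it, but be aware it is sharper than a ``fixed normalization'': the constant is $j$-dependent. With $s_j(z)=\sum_{n\ge 0}\{(n+1)/j\}z^n$ (the normalization the paper's own proof uses, consistent with $\Psi r_j=\tfrac{1}{\sqrt2}s_j$) one has $s_j(z)=\frac{1}{j(1-z)}(\log Q_j)'(z)$, and your identity $(\log Q_{km})'(z)=(\log Q_k)'(z)+kz^{k-1}(\log Q_m)'(z^k)$ becomes $km(1-z)s_{km}=k(1-z)s_k+k^{1/2}m(1-z)\,T_ks_m$, which after dividing by $km(1-z)$ gives exactly $T_ks_m=k^{1/2}\bigl(s_{km}-\frac{1}{m}s_k\bigr)$; the $\frac1m$ is precisely the ratio of the normalizing constants of $s_k$ and $s_{km}$. (If instead you took $s_j=\frac{1}{1-z}(\log Q_j)'$ literally, as in the first displayed formula of the paper's introduction --- which is off by a factor of $j$ from the normalization the paper actually uses --- you would land on $T_ks_m=k^{-1/2}(s_{km}-s_k)$, so your instinct to double-check against the Taylor coefficients $\{(n+1)/j\}$ is exactly the right safeguard.) Your deduction of $T_k(\cH_0)\subseteq\cH_0$ from the formula, via linearity on $\mathrm{Span}(\cF)$ and the boundedness $\|T_k\|=1$ to pass to the closure, is the standard argument and is fine; the paper leaves it implicit.
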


The family $T_\bN$ seems not enough to generate $B(\cH)$ as a weak-operator closed algebra. So we put their adjoints $T_k^\ast$ together. A von-Neumann algebra is a unital $\ast$-algebra of bounded operators on a Hilbert space that is closed in the weak operator topology. Inspired by statement (\romannumeral3) which is shown as previous, we prove the following theorem.

\begin{theorem} \label{thm_TN_generate_BH}
The von-Neumann algebra generated by $T_\bN$ in $B(\cH)$ is exactly $B(\cH)$.
\end{theorem}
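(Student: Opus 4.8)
The plan is to invoke von Neumann's double commutant theorem: since the set $\{T_k,T_k^\ast:\,k\in\bN\}$ is self-adjoint, the von Neumann algebra it generates equals its double commutant, so it suffices to prove that the commutant is trivial, i.e. that every bounded operator $S$ on $\cH$ commuting with all $T_k$ and all $T_k^\ast$ is a scalar multiple of the identity. I would first record the action of the operators on the orthogonal basis $\{z^n\}$, writing $w_n=\frac{2}{(n+1)(n+2)}=\|z^n\|^2$. A direct computation from the definition \eqref{eq_Tk_def} gives $T_kz^m=k^{1/2}\sum_{j=0}^{k-1}z^{k(m+1)-1+j}$, so the images $T_kz^m$ are supported on disjoint blocks of exponents and are mutually orthogonal. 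Dualizing this, $T_k^\ast z^n$ is a nonzero scalar multiple $c_{k,n}z^{\lfloor(n+1)/k\rfloor-1}$ of a single basis vector when $n\ge k-1$, and $T_k^\ast z^n=0$ when $n\le k-2$; in particular $T_k^\ast 1=0$ for every $k\ge 2$.

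The technical heart of the argument is the identification $\bigcap_{k\ge2}\ker T_k^\ast=\bC\cdot 1$. To prove it I would observe that $v=\sum_n a_nz^n$ lies in $\ker T_k^\ast$ for all $k$ exactly when $v\perp\Ran(T_k)$ for all $k$, which by the formula for $T_kz^m$ amounts to the block relations $\sum_{n=kq-1}^{k(q+1)-2}a_nw_n=0$ for all $k\ge2$ and $q\ge1$. Setting $\beta_p=a_{p-1}w_{p-1}$ and $B(k,q)=\sum_{p=kq}^{k(q+1)-1}\beta_p$, these read $B(k,q)=0$. The key manipulation is the telescoping identity
\[
B(k+1,1)-B(k,1)=\beta_{2k}+\beta_{2k+1}-\beta_k=B(2,k)-\beta_k,
\]
which together with $B(2,k)=0$ forces $\beta_k=0$ for every $k\ge2$; hence $a_n=0$ for all $n\ge1$ and $v$ is constant. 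This is the step where the arithmetic of the index ranges is used essentially, and I expect it to be the main obstacle.

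With this lemma the conclusion follows cleanly. Given $S$ in the commutant, $T_k^\ast(S1)=S(T_k^\ast 1)=0$ for all $k\ge2$, so $S1=\lambda 1$ for some $\lambda\in\bC$; taking adjoints of the relations $ST_k=T_kS$ and $ST_k^\ast=T_k^\ast S$ shows $S^\ast$ also commutes with every $T_k$ and $T_k^\ast$, so likewise $S^\ast 1\in\bC\cdot 1$. I would then prove $u_n:=Sz^n-\lambda z^n=0$ for all $n$ by strong induction, the base case $u_0=0$ holding by the choice of $\lambda$. Assuming $u_m=0$ for $m<n$, the relation $ST_k^\ast=T_k^\ast S$ and the adjoint formula give $T_k^\ast u_n=c_{k,n}u_{\lfloor(n+1)/k\rfloor-1}$ for $2\le k\le n+1$ and $T_k^\ast u_n=0$ for $k>n+1$; since $\lfloor(n+1)/k\rfloor-1<n$ whenever $k\ge2$, the induction hypothesis yields $T_k^\ast u_n=0$ for every $k\ge2$, whence $u_n\in\bC\cdot1$ by the lemma. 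Finally, for $n\ge1$,
\[
\langle u_n,1\rangle=\langle Sz^n,1\rangle-\lambda\langle z^n,1\rangle=\langle z^n,S^\ast 1\rangle=0,
\]
because $S^\ast 1$ is constant and $z^n\perp1$, which forces $u_n=0$. Thus $Sz^n=\lambda z^n$ for all $n$, so $S=\lambda I$, the commutant is $\bC I$, and the von Neumann algebra generated by $T_\bN$ is all of $B(\cH)$.
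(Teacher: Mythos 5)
Your proposal is correct, and while it shares the paper's overall strategy --- compute the commutant of $\{T_k,T_k^\ast\}$ on the orthogonal basis $\{z^n\}$ and show it is $\bC I$ --- the internal organization is genuinely different and, in places, cleaner. The paper has no analogue of your lemma $\bigcap_{k\geq 2}\ker T_k^\ast=\bC\cdot 1$; instead it first kills the strictly upper-triangular matrix entries $a_{l,N}$ ($N>l$) of $S$ by combining the block relations for the two consecutive moduli $k=N+1$ and $k=N+2$ up to a common index $M$ (using $\gcd(N+1,N+2)=1$), whereas you extract the same vanishing from the single telescoping identity $B(k+1,1)-B(k,1)=B(2,k)-\beta_k$, which is a tidier way to see that the block relations force all coefficients beyond the constant term to vanish. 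The second difference is how the constant component of $Sz^n-\lambda z^n$ is eliminated: you use that the commutant of a self-adjoint set is itself self-adjoint, so $S^\ast 1\in\bC\cdot 1$ and $\langle u_n,1\rangle=\langle z^n,S^\ast 1\rangle=0$; the paper instead runs a case analysis on whether $L+2$ is composite (commuting $S$ with $T_r$ for $r\mid L+2$) or prime (commuting $S$ with the product $T_2^\ast T_{L+2}$). Your route buys a uniform argument with no case split and a reusable structural lemma about the joint kernel of the $T_k^\ast$; the paper's route buys a proof that never needs to pass to $S^\ast$ and that makes visible which specific operators in the monoid are doing the work at each stage. I verified the index bookkeeping in your lemma ($B(k,q)=\sum_{p=kq}^{k(q+1)-1}\beta_p$ with $\beta_p=a_{p-1}\|z^{p-1}\|^2$ matches the relation $\ker T_k^\ast=\Ran(T_k)^\perp$) and in your induction step ($\kappa(k;n)=\lfloor (n+1)/k\rfloor-1<n$ for $k\geq 2$); both are sound, so the argument is complete.
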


As a corollary, we conclude the following.

\begin{corollary} \label{cor_RH_Lkast_invariant}
Riemann hypothesis is true if and only if $\cH_0$ is $T_k^\ast$-invariant for all $k\in \bN$.
\end{corollary}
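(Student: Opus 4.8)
The plan is to deduce the Corollary from Theorem 0 (RH $\iff \cH_0=\cH$) together with Theorem \ref{thm_TN_generate_BH} and Lemma \ref{lem_Tk_sm}, so that the whole argument reduces to showing that $\cH_0$ is $T_k^\ast$-invariant for all $k$ precisely when $\cH_0=\cH$. One direction is immediate: if RH holds, then $\cH_0=\cH$ by Theorem 0, and the full space $\cH$ is trivially invariant under every bounded operator, in particular under each $T_k^\ast$. So the substance is the reverse implication, for which I would pass to the orthogonal projection $P$ of $\cH$ onto $\cH_0$ and aim to show $P$ lies in the commutant of the von-Neumann algebra generated by $T_\bN$.

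First I would record the operator-theoretic translation of the two invariance hypotheses. By Lemma \ref{lem_Tk_sm}, $\cH_0$ is $T_k$-invariant, which reads $T_kP=PT_kP$; the standing assumption that $\cH_0$ is $T_k^\ast$-invariant reads $T_k^\ast P=PT_k^\ast P$, and taking adjoints of this last identity (using $P^\ast=P$) gives $PT_k=PT_kP$. Combining the two yields $T_kP=PT_kP=PT_k$, i.e. $P$ commutes with $T_k$ for every $k\in\bN$; a further adjoint shows $P$ commutes with $T_k^\ast$ as well. Thus $P$ commutes with every member of $T_\bN\cup T_\bN^\ast$.

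Next I would invoke the fact that the commutant $\{P\}'$ is a von-Neumann algebra, hence a unital $\ast$-algebra closed in the weak operator topology. Since it contains every $T_k$ and every $T_k^\ast$, it contains the entire von-Neumann algebra $\cW$ generated by $T_\bN$. Therefore $P\in\cW'$. By Theorem \ref{thm_TN_generate_BH} we have $\cW=B(\cH)$, so $P\in B(\cH)'=\bC I$. A scalar projection must equal $0$ or $I$, and since $s_2\in\cH_0$ is nonzero we have $P\neq 0$, forcing $P=I$, that is, $\cH_0=\cH$. Theorem 0 then gives RH.

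I do not expect a genuine obstacle here: the real difficulty has already been absorbed into Theorem \ref{thm_TN_generate_BH}, which supplies $\cW=B(\cH)$. The only points requiring care are the adjoint bookkeeping that upgrades the two one-sided invariances to the commutation relation $[P,T_k]=0$, and the appeal to the fact that a commutant is automatically weak-operator closed (so that commuting with the generators $T_k,T_k^\ast$ propagates to commuting with all of $\cW$); both are standard.
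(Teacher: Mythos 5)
Your proof is correct and follows essentially the same route as the paper: the easy direction via Theorem 0, and the converse by combining Lemma \ref{lem_Tk_sm} with the hypothesis to get invariance of $\cH_0$ under the whole von-Neumann algebra generated by $T_\bN$, then applying Theorem \ref{thm_TN_generate_BH}. Your passage through the projection $P$ and the commutant is just a careful unpacking of the paper's one-line assertion that a subspace invariant under the $T_k$ and $T_k^\ast$ is invariant under the von-Neumann algebra they generate, together with the observation that $\cH_0\neq\{0\}$.
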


By Corollary \ref{cor_RH_Lkast_invariant}, it would be interesting to investigate the invariant subspaces of $L_k^\ast$ $(k\in \bN)$. A short discussion is held at the end of Section \ref{sec_3}. In particular,  the function $\beta(z)=\frac{1}{1-z}$ is a common eigenfunction of all the operators $T_k^\ast$ $(k\in \bN)$. We ask the following questions.

Question 1: Can we determine the spectrum of $T_k$ and $T_k^\ast$ for a given $k$ with $k\geq 2$?

Question 2: Is $\beta$ the only common eigenfunction of all the operators $T_k^\ast$ $(k\in \bN)$?

Question 3: Are there any non-trivial infinite-dimensional invariant subspaces of $\{T_k^\ast:\, k\in \bN\}$? What properties do they have?

Question 4: Are there any non-trivial finite-dimensional invariant subspaces of $T_\bN$?

These invariant spaces might be related to Kadison transitive algebra problem or (hyper)invariant subspace problem.

This paper is organized as follows. In Section \ref{sec_2}, we prove Theorems \ref{thm_spanfk=A_1^2} and \ref{thm_notin_Hbot}. In Section \ref{sec_3}, we study the properties of $T_k$ and $T_k^\ast$, and prove Lemma \ref{lem_Tk_sm}, Theorem \ref{thm_TN_generate_BH} and Corollary \ref{cor_RH_Lkast_invariant}. For a complex-valued function $f$ and a non-negative valued function $g$, the notation $f\ll g$ means that $|f|\leq c g$ for some absolute constant $c>0$. For basics in number theory, we refer to \cite{Iwa} and \cite{Nat}. For basics in Bergman spaces, see \cite{HKZ}. For those in operator algebra and functional analysis, see \cite{Kad-Rin}.

\section{On Hilbert Space Reformulation}
\label{sec_2}

The Hardy space $H^2(\bD)$ consists of analytic functions on $\bD$ such that
\[
\|f\|_{H^2} = \sup\limits_{0\leq r<1} \left(\frac{1}{2\pi}\int_0^{2\pi}|f(re^{\i \theta})|^2d\theta\right)^{1/2}<+\infty.
\]
When $f(z)=\sum\nolimits_{n=0}^\infty a_n z^n$, one also has
\begin{equation} \label{eq_norm_H2}
\|f\|_{H^2} = \sum\limits_{n=0}^\infty |a_n|^2.
\end{equation}
Combining \eqref{eq_norm_Bergman} and \eqref{eq_norm_H2}, it is not hard to obtain the following lemma.

\begin{lemma} \label{lem_equiv_H^2_A_1^2}
Let $G_l,G\in H^2(\bD)$ $(l=1,2,\ldots)$. Suppose that $\|G_l-G\|_{H^2}\rightarrow 0$ as $l\rightarrow \infty$. Denote $g_l=G_l^\prime$ and $g=G^\prime$. Then $\|g_l-g\|_2\rightarrow 0$ as $l\rightarrow \infty$.
\end{lemma}

\begin{proof}
Without loss of generality, we can assume that $G=0$. (Otherwise we substitute $G_l-G$ for $G_l$.) Let $G_l(z)=\sum\limits_{n=0}^\infty a_n^{(l)}z^n$. Then
\[
\sum\limits_{n=0}^\infty |a_n^{(l)}|^2 = \|G_l\|_{H^2} \rightarrow 0,\quad (l\rightarrow \infty).
\]
Now $g_l(z)=G_l^\prime(z)=\sum\nolimits_{n=1}^\infty a_n^{(l)}n z^{n-1}$. It follows that
\[
\|g_l\|_2 = \sum\limits_{n=1}^\infty \frac{2|a_n^{(l)}n|^2}{n(n+1)}\leq 2\sum\limits_{n=0}^\infty |a_n^{(l)}|^2 \rightarrow 0,\quad (l\rightarrow \infty).
\]
The lemma then follows.
\end{proof}

Now we prove Theorem \ref{thm_spanfk=A_1^2}.

\begin{proof} [Proof of Theorem \ref{thm_spanfk=A_1^2}]
Let $F_k(z) = \log \frac{1-z^k}{1-z}$ $(z\in \bD)$ for $k\in \bN$. Then $F_k^\prime=f_k$. By Lemma \ref{lem_equiv_H^2_A_1^2}, it is sufficient to prove that, for any given $m\geq 1$,
\[
z^m \in \overline{\text{Span}}^{\|\cdot\|_{H^2}}\{F_k:k=2,3,\ldots\}.
\]
Note that
\[
F_k(z) = \sum\limits_{n=1}^\infty \frac{z^n}{n}-\sum\limits_{m=1}^\infty \frac{z^{km}}{m} = \sum\limits_{n=1}^\infty \left(\frac{1}{n}-\delta_{k|n}\cdot\frac{k}{n}\right) z^n.
\]
Here $\delta_{k|n}$ equals $1$ when $k|n$ and equals $0$ otherwise. Also let $\delta_{m|k|n}$ be $1$ when $m|k$ and $k|n$, and be $0$ otherwise. And let $\delta_{m=n}$ be $1$ when $m=n$ and be $0$ otherwise. %For convenience, we set $\mu(x)=0$ for $x\notin \bN$.
Let $K$ be any natural number with $K \geq m$. Note that
\begin{align*}
&\left\|\sum\limits_{k=1}^K \frac{\mu(k/m)\delta_{m|k}}{k/m} F_k(z) + z^m\right\|_{H^2}^2 = \sum\limits_{n=1}^\infty \left|\sum\limits_{k=1}^K \frac{\mu(k/m)\delta_{m|k}}{k/m}\left(\frac{1}{n}-\delta_{k|n}\cdot\frac{k}{n}\right)+\delta_{m=n}\right|^2\\
&\quad\quad\quad \ll \sum\limits_{n=1}^\infty \frac{1}{n^2}\left|\sum\limits_{k=1}^K \frac{\mu(k/m)\delta_{m|k}}{k/m}\right|^2+\sum\limits_{n=1}^\infty \left|\sum\limits_{k=1}^K \frac{\mu(k/m)\delta_{m|k}}{k/m}\left(-\delta_{k|n}\cdot\frac{k}{n}\right)+\delta_{m=n}\right|^2.
\end{align*}
By the equality
\begin{equation} \label{eq_mu}
\sum\limits_{k=1}^\infty \mu\left(\frac{k}{m}\right)\delta_{m|k|n} = \delta_{m=n},
\end{equation}
one deduces that, for $n \leq K$,
\[
\sum\limits_{k=1}^K \frac{\mu(k/m)\delta_{m|k}}{k/m}\cdot\left(-\delta_{k|n}\cdot\frac{k}{n}\right)+\delta_{m=n}= 0.
\]
Let $\tau$ be the divisor function, i.e., $\tau(n)$ counts the number of different divisors of $n$. It satisfies that $\tau(n)\ll n^\varepsilon$ for any $\varepsilon>0$. For $n>K$, we have
\[
\left|\sum\limits_{k=1}^K \frac{\mu(k/m)\delta_{m|k}}{k/m}\left(-\delta_{k|n}\cdot\frac{k}{n}\right)+\delta_{m=n} \right|\leq \frac{m}{n}\cdot \tau(n).
\]
It follows that
\begin{align*}
&\left\|\sum\limits_{k=1}^K \frac{\mu(k/m)\delta_{m|k}}{k/m} F_k(z) + z^m\right\|_{H^2}^2\\
&\quad \quad \quad\ll \left(\sum\limits_{n=1}^\infty \frac{1}{n^2}\right)\cdot \left|\sum\limits_{1\leq k^\prime\leq K/m} \frac{\mu(k^\prime)}{k^\prime}\right|^2+ \sum\limits_{n>K}\left|\frac{m\tau(n)}{n}\right|^2\nonumber\\
&\quad \quad \quad\ll \left|\sum\limits_{1\leq k^\prime\leq K/m} \frac{\mu(k^\prime)}{k^\prime}\right|^2 + m^2 K^{-1/2}. \label{eq1}
\end{align*}
The right-hand side of above formula tends to $0$ as $K\rightarrow \infty$. Hence
\[
z^m \in \overline{\text{Span}}^{\|\cdot\|_{H^2}}\{F_k:k=2,3,\ldots\}
\]
for all $m\geq 1$. Now Lemma \ref{lem_equiv_H^2_A_1^2} shows that
\[
m z^{m-1} \in \overline{\text{Span}}^{\|\cdot\|_2}\{f_k:k=2,3,\ldots\}
\]
for $m=1,2,\ldots$. The proof is completed.
\end{proof}

Next, we prove Theorem \ref{thm_notin_Hbot}.

\begin{proof} [Proof of Theorem \ref{thm_notin_Hbot}]
Write
\[
\cB=\left\{f\in \cH: \frac{f(z)}{1-z}\in \cH\right\}.
\]
Suppose that $g\in \cB\cap \cH^\bot$. Our aim to is show that $g=0$.

Since $\frac{g(z)}{1-z}\in \cH=A_1^2(\bD)$, then $\frac{g(z)}{1-\overline{z}}\in L^2(\bD, dA_1)$. We use $\<\cdot,\cdot\>$ to denote the inner product on either $L^2(\bD,dA_1)$ or $\cH$. Now
\[
\left\<f_k(z),\frac{g(z)}{1-\overline{ z}}\right\>=\left\<(1-z)s_k(z),\frac{g(z)}{1-\overline{ z}}\right\>= \<s_k(z), g(z)\> = 0.
\]
By Theorem \ref{thm_spanfk=A_1^2}, we have ${\overline{\text{Span}}}^{\|\cdot\|_2}\{f_k(z)\}=\cH$. It follows that $\left\<f,\frac{g(z)}{1-\overline{z}}\right\>=0$ for any $f\in \cH$. In particular,
\[
\left\<z^v,\frac{g(z)}{1-\overline{z}}\right\>=0,\quad (v\in \bN_0).
\]

Let $g(z)=\sum\limits_{n=0}^\infty a_n z^n$. Then
\begin{align*}
\frac{g(z)}{1-\overline{z}} &= \left(\sum\limits_{n=0}^\infty a_n z^n\right)\left(\sum\limits_{n=0}^\infty (\overline{  z})^n\right) \\
&= \sum\limits_{n=0}^\infty \sum\limits_{n_1-n_2=n} a_{n_1}  |z|^{2n_2} z^n + \sum\limits_{n=1}^\infty \sum\limits_{n_2-n_1=n} a_{n_1}  |z|^{2n_1} \overline{z}^n.
\end{align*}
Note that $\<z^v, |z|^r z^s\> =0$ whenever $v,r,s,\geq 0$ and $s\neq v$. Moreover, when $v,r\geq 0$ and $s\geq 1$, we also have $\<z^v, |z|^r \overline{z}^s\> =0$. Hence
\[
0=\left\<z^v,\frac{g(z)}{1-\overline{  z}}\right\> = \sum\limits_{n_1-n_2=v} a_{n_1} \left\<z^v,|z|^{2n_2}z^v\right\>.
\]
Moreover,
\[
\left\<z^v,|z|^{2n_2}z^v\right\> = 2\int_0^1 r^{2v+2n_2+1}(1-r^2)dr = \frac{1}{(v+n_2+1)(v+n_2+2)}.
\]
We have that
\[
0= \sum\limits_{n_1-n_2=v}  \frac{a_{n_1}}{(v+n_2+1)(v+n_2+2)} = \sum\limits_{m=v}^\infty \frac{a_m}{(m+1)(m+2)},\quad (v\in \bN_0).
\]
By induction, one can see that $a_m=0$ for all $m\in \bN_0$, i.e., $g=0$. So $\cH^\bot \cap \cB=\{0\}$. The proof is completed.
\end{proof}

%\begin{lemma}
%Éè$\|f_n-f\|_{A_1^2}\rightarrow 0$ $(n\rightarrow \infty)$£¬ÇÒ$\frac{f_n}{1-z},\, \frac{f}{1-z}\in A_1^2$. Èô´æÔÚ$\delta>0$£¬Ê¹µÃ$f_n$ ÔÚ$\{z\in \bD: |1-z|<\delta\}$ÄÚÒ»ÖÂÊÕÁ²ÓÚ$f$£¬Ôò$\left\|\frac{f_n}{1-z}-\frac{f}{1-z}\right\|_{A_1^2}\rightarrow 0$.
%\end{lemma}

\section{Operators and Operator Algebras}
\label{sec_3}

The space $\cH=A_1^2(\bD)$ has an orthogonal basis $\{z^m:\, m\in \bN_0\}$. Recalling \eqref{eq_Tk_def}, for $k\in \bN$, we have
\[
T_k (z^m) = k^{1/2}z^{km+k-1}\frac{1-z^k}{1-z}= k^{1/2}\sum\limits_{l=0}^{k-1} z^{km+k-1+l}, \quad (m\in \bN_0, \, z\in \bD).
\]

%That is to say, for $f(z)=\sum\limits_{n=0}^\infty a_n(f)z^n$, we have
%\[
%(T_k f)(z) = k^{1/2}\sum\limits_{m=0}^\infty a_m(f) z^{km+k-1}\sum\limits_{l=0}^{k-1} z^{l}  = %\sum\limits_{n= k-1}^\infty a_{\kappa(n)}(f)z^n.
%\]
The operator $T_1=I$ is the identity operator on $\cH$. For $j,k\in \bN$, we have
\begin{align*}
(T_jT_k) (z^m) &= k^{1/2} \sum\limits_{l=0}^{k-1} T_j(z^{km+k-1+l}) = (jk)^{1/2}  \sum\limits_{l=0}^{k-1}\sum\limits_{n=0}^{j-1}z^{j(km+k-1+l)+j-1+n} \\
&= (jk)^{1/2}\sum\limits_{l=0}^{k-1}\sum\limits_{n=0}^{j-1}z^{jkm+jk-1+(jl+n)} = T_{jk}(z^m), \quad (m\in \bN_0, \, z\in \bD).
\end{align*}
So $T_jT_k=T_{jk}$ for any $j,k\in \bN$. Let $f(z)=\sum\nolimits_{n=0}^\infty a_nz^n$ be a function in $\cH$. % satisfying $\|f\|=\sum\nolimits_{n=0}^\infty \frac{2|a_n|^2}{(n+1)(n+2)}=1$.
Then
\[
(T_k f)(z) = k^{1/2}\sum\limits_{n=0}^\infty a_n \sum\limits_{l=0}^{k-1}z^{kn+k-1+l} = k^{1/2}\sum\limits_{m=k-1}^\infty a_{\kappa(k;m)}z^m,
\]
where we define
\begin{equation} \label{eq_def_kappa}
\kappa(k;m)=\lfloor (m+1)/k\rfloor-1.
\end{equation}
So
\begin{align*}
\|T_kf\|^2_2 &= k\sum\limits_{m=k-1}^\infty \frac{2|a_{\kappa(k;m)}|^2}{(m+1)(m+2)} = k\sum\limits_{n=0}^\infty 2|a_n|^2 \sum\limits_{l=kn+k-1}^{kn+2k-2}\frac{1}{(l+1)(l+2)}\\
&= k\sum\limits_{n=0}^\infty 2|a_n|^2 \cdot \left(\frac{1}{kn+k}-\frac{1}{kn+2k}\right) = \sum\limits_{n=0}^\infty \frac{2|a_n|^2}{(n+1)(n+2)}  = \|f\|^2_2.
\end{align*}
It follows that $\|T_k\|=1$. Note that, for $m,n\in\bN_0$,
\begin{align*}
\<z^m,T_k^\ast z^n\>&=\<T_k z^m, z^n\> = k^{1/2}\sum\limits_{l=0}^{k-1}\<z^{km+k-1+l},z^n\> \\
&=
\begin{cases}
\frac{2k^{1/2}}{(n+1)(n+2)},\quad &\text{if }km+k-1\leq n\leq km+2k-2,\\
0,\quad &\text{otherwise}.
\end{cases}
\end{align*}
We conclude that the adjoint operator $T_k^\ast$ is given by
\begin{equation} \label{eq_T_k_ast_z_n}
T_k^\ast z^n =
\begin{cases}
\frac{k^{1/2}(\kappa(k;n)+1)(\kappa(k;n)+2)}{(n+1)(n+2)}z^{\kappa (k;n)},\quad &\text{if }n\geq k-1,\\
0,\quad &\text{if }n\leq k-2.
\end{cases}
\end{equation}
Suppose that $f(z)=\sum\limits_{n=0}^\infty a_n z^n$. Then
\begin{align}
(T_k^\ast f)(z) &=  \sum\limits_{n=k-1}^\infty \frac{k^{1/2}(\kappa(k;n)+1)(\kappa(k;n)+2)}{(n+1)(n+2)} a_n z^{\kappa(k;n)} \nonumber\\
&= k^{1/2}\sum\limits_{m=0}^\infty \left(\sum\limits_{n=km+k-1}^{km+2k-2}\frac{a_n }{(n+1)(n+2)} \right)(m+1)(m+2)z^m. \label{eq_T_k_ast_f}
\end{align}

Note that
\begin{align*}
T_k^\ast T_k z^m &=k^{1/2}\sum\limits_{l=0}^{k-1} T_k^\ast \left(z^{km+k-1+l}\right) \\
&= k \sum\limits_{l=0}^{k-1}\frac{(m+1)(m+2)}{(km+k+l)(km+k+l+1)} z^m = z^m,\quad (m\in \bN_0).
\end{align*}
Hence $T_k^\ast T_k = I$. It follows that $(T_k T_k^\ast)^2 = T_kT_k^\ast = (T_kT_k^\ast)^\ast$. In particular, the operators $E_k=T_k T_k^\ast$ $(k\in \bN)$ is also a projection. %Let $\cE_k$ be the closure of $\text{Ran}E_k$ in $\cH$.

Now we prove Lemma \ref{lem_Tk_sm}.

\begin{proof} [Proof of Lemma \ref{lem_Tk_sm}]
Note that
\[
s_m(z) = \sum\limits_{n=0}^\infty \left\{\frac{n+1}{m}\right\}z^n = \frac{1}{m}\sum\limits_{c=0}^{m-2} (c+1) \sum\limits_{h=0}^\infty  z^{mh+c}.
\]
So
\begin{align*}
k^{-1/2}(T_k s_m)(z) = \frac{1}{m}\sum\limits_{c=0}^{m-2} (c+1) \sum\limits_{h=0}^\infty  \sum\limits_{l=0}^{k-1} z^{k(mh+c)+k-1+l}.
\end{align*}
Note that
\begin{align*}
(s_{km}-\frac{1}{m}s_k)(z) =  \sum\limits_{n=0}^\infty \left(\left\{\frac{n+1}{mk}\right\}-\frac{1}{m}\left\{\frac{n+1}{k}\right\}\right)z^n.
\end{align*}
The terms with $0\leq n\leq k-2$ equal zero. For $n\geq k-1$, we let $n=mkh+(c+1)k+l-1$ for some $h\in \bN_0$, $0\leq c\leq m-1$ and $0\leq l\leq k-1$. When $c=m-1$, we have
\[
\left\{\frac{n+1}{mk}\right\}-\frac{1}{m}\left\{\frac{n+1}{k}\right\} =
\frac{l}{mk}-\frac{1}{m}\cdot\frac{l}{k} = 0.
\]
When $0\leq c\leq m-2$, we obtain
\[
\left\{\frac{n+1}{mk}\right\}-\frac{1}{m}\left\{\frac{n+1}{k}\right\} =
\frac{(c+1)k+l}{mk}-\frac{1}{m}\cdot\frac{l}{k} = \frac{c+1}{m}.
\]
It follows that $(s_{km}-\frac{1}{m}s_k)(z)=k^{-1/2}(T_k s_m)(z)$. The lemma follows.
\end{proof}

%\begin{align*}
%(T_k^\ast s_l)(z) &= k^{1/2}\sum\limits_{m=0}^\infty \left(\sum\limits_{n=km+k-1}^{km+2k-2}\frac{\{(n+1)/l\}}{(n+1)(n+2)} \right)(m+1)(m+2)z^m \\
%\end{align*}

Next, we prove Theorem \ref{thm_TN_generate_BH}

\begin{proof}
Let $\fR$ be the von-Neumann algebra generated by $L_\bN$ in $B(\cH)$. Denote by $\fR^\prime$ the commutant of $\fR$. That is to say, for $S\in \fR^\prime$ and $T\in \fR$, one always has $ST=TS$. Note that $\fR=B(\cH)$ is equivalent to $\fR^\prime =\bC I$. Let $S\in \fR^\prime$. Suppose that $Sz^l =\sum\nolimits_{n=0}^\infty a_{l,n} z^n$ $(l\in \bN_0)$, where $a_{l,n}\in \bC$. It is sufficient to prove that $S=a_{0,0}I$.

First, we shall verify that $a_{l,N}=0$ whenever $N>l$. Note that $T_k^\ast z^l=0$ for all $k\geq l+2$. Then $(T_k^\ast S)z^l = (ST_k^\ast)z^l = S 0 =0$. Since
\begin{equation} \label{eq_Tkast_S}
T_k^\ast (S z^l) = k^{1/2}\sum\limits_{m=0}^\infty \left(\sum\limits_{n=km+k-1}^{km+2k-2}\frac{a_{l,n} }{(n+1)(n+2)} \right)(m+1)(m+2)z^m,
\end{equation}
we obtain that
\begin{equation} \label{eq_=0_induction_0}
\sum\limits_{n=km+k-1}^{km+2k-2}\frac{a_{l,n} }{(n+1)(n+2)} = 0,\quad (k\geq l+2, \, m\in \bN_0).
\end{equation}
In view of the fact $\gcd(N,N+1)=1$, we can taking an element $M$ from the non-empty set
\[
\{(N+1)m+2N:\,m\in \bN_0\} \cap \{(N+2)m+2N+2:\, m\in \bN_0\}.
\]
It follows from \eqref{eq_=0_induction_0}, by summing with $k=N+1$ over $0\leq m\leq (M-2N)/(N+1)$, and by summing with $k=N+2$ over $0\leq m\leq (M-2N-2)/(N+2)$, that
\[
\sum\limits_{n=N}^{M}\frac{a_{l,n} }{(n+1)(n+2)} = \sum\limits_{n=N+1}^{M}\frac{a_{l,n} }{(n+1)(n+2)}=0.
\]
Thus $a_{l,N}=0$ for $N>l$. In particular, one concludes that $S1 = a_{0,0}$.

Now we assume as inductive hypothesis that $Sz^l = a_{0,0}z^l$ for all $l\leq L-1$, where $L\geq 1$. Our aim is to show that $Sz^L = a_{0,0}z^L$. Recall that, for $k\geq 2$,
\[
T_k^\ast z^L =
\begin{cases}
\frac{k^{1/2}(\kappa(k;L)+1)(\kappa(k;L)+2)}{(L+1)(L+2)}z^{\kappa (k;L)},\quad &\text{if }L\geq k-1,\\
0,\quad &\text{if }L\leq k-2.
\end{cases}
\]
In view of $\kappa(k;L)<L$, one deduces by inductive hypothesis that
\[
(T_k^\ast S)z^L =  S(T_k^\ast z^L) =
\begin{cases}
\frac{k^{1/2}(\kappa(k;L)+1)(\kappa(k;L)+2)}{(L+1)(L+2)}a_{0,0} z^{\kappa (k;L)},\quad &\text{if }k\leq L+1,\\
0,\quad &\text{if }k\geq L+2.
\end{cases}
\]
%On the other hand, we have
%\[
%T_k^\ast (Sz^L) = k^{1/2}\sum\limits_{m=0}^\infty \left(\sum\limits_{n=km+k-1}^{km+2k-2}\frac{a_{L,n} }{(n+1)(n+2)} \right)(m+1)(m+2)z^m,
%\]
For $k\leq L+1$, combining \eqref{eq_Tkast_S} one obtains that
\begin{equation} \label{eq_=0_induction}
\sum\limits_{n=km+k-1}^{km+2k-2}\frac{a_{L,n} }{(n+1)(n+2)} =
\begin{cases}
\frac{a_{0,0}}{(L+1)(L+2)}, \quad &\text{if }m= \kappa(k;L),\\
0,\quad &\text{otherwise}.
\end{cases}
\end{equation}
Now taking $k=L+1$, we have $\kappa(k;L)=0$. So
\[
\frac{a_{0,0}}{(L+1)(L+2)} = \sum\limits_{n=L}^{2L}\frac{a_{L,n} }{(n+1)(n+2)} = \frac{a_{L,L}}{(L+1)(L+2)},
\]
which leads to $a_{L,L}=a_{0,0}$.

It remains to prove that $a_{L,0}=\ldots = a_{L,L-1}=0$ under the inductive hypothesis $Sz^l = a_{0,0}z^l$ for $l\leq L-1$.  When $L+2$ is not a prime, write $L+2 =rt$ for some $r,t\geq 2$. Note that
\begin{align*}
(ST_r)z^{t-2} &= r^{1/2}S(z^{rt-r-1}+\ldots + z^{L}) \\
&= r^{1/2}a_{0,0}(z^{rt-r-1}+\ldots + z^{L-1})+r^{1/2}(a_{L,0}+a_{L,1}z+\ldots+ a_{L,L-1}z^{L-1}+a_{0,0}z^L).
\end{align*}
In view of $t-2<L$, we also have
\[
(T_r S)z^{t-2} = a_{0,0}T_r z^{t-2} =  r^{1/2}a_{0,0}(z^{rt-r-1}+\ldots + z^{L}).
\]
It follows from $ST_r=T_rS$ that $a_{L,0}=\ldots = a_{L,L-1}=0$. When $L+2$ is a prime, the number $L$ is odd. Note that
\begin{align*}
(T_2^\ast T_{L+2}) 1&= (L+2)^{1/2} T_2^\ast (z^{L+1}+\ldots + z^{2L+2}) \\
&=
2^{1/2}(L+2)^{1/2}\cdot \left(\frac{L+1}{4(L+2)}z^{(L-1)/2}+\frac{1}{2}(z^{(L+1)/2}+\ldots + z^L)\right)
\end{align*}
Hence
\begin{align*}
&2^{-1/2}(L+2)^{-1/2}\cdot (S T_2^\ast T_{L+2})1 = \frac{a_{0,0}(L+1)}{4(L+2)}z^{(L-1)/2}\\
&\quad\quad\quad +\frac{a_{0,0}}{2}(z^{(L+1)/2}+\ldots +z^{L-1})+\frac{1}{2}(a_{L,0}+\ldots a_{L,L-1}z^{L-1}+a_{0,0}z^L).
\end{align*}
And
\begin{align*}
&2^{-1/2}(L+2)^{-1/2}\cdot (T_2^\ast T_{L+2}S)1 = 2^{-1/2}(L+2)^{-1/2}\cdot a_{0,0}(T_2^\ast T_{L+2})1 \\
&\quad\quad\quad =\frac{a_{0,0}(L+1)}{4(L+2)}z^{(L-1)/2}+\frac{a_{0,0}}{2}(z^{(L+1)/2}+\ldots + z^L).
\end{align*}
It follows from $ST_2^\ast T_{L+2}=T^\ast_2 T_{L+2}S$ that $a_{L,0}=\ldots = a_{L,L-1}=0$.
%. When $L$ is even, we have
%\begin{align*}
%(S T_2^\ast T_{L+2})1 &= (L+2)^{1/2}(ST_2^\ast)(z^{L+1}+\ldots +z^{2L+2})= 2^{1/2}(L+2)^{1/2}\cdot \frac{1}{2} S(z^{L/2}+\ldots + z^{L})\\
%&= 2^{1/2}(L+2)^{1/2}\cdot \frac{1}{2}\left(a_{0,0}(z^{L/2}+\ldots +z^{L-1})+(a_{L,0}+\ldots a_{L,L-1}z^{L-1}+a_{0,0}z^L)\right).
%\end{align*}
%And
%\[
%(T_2^\ast T_{L+2}S)1 = a_{0,0}(T_2^\ast T_{L+2})1 =2^{1/2}(L+2)^{1/2}\cdot \frac{1}{2} a_{0,0}(z^{L/2}+\ldots + z^{L}).
%\]
%Then $a_{L,0}=\ldots = a_{L,L-1}=0$. When $L$ is odd, we have
%\begin{align*}
%(S T_2^\ast T_{L+2})1 &= (L+2)^{1/2}(ST_2^\ast)(z^{L+1}+\ldots +z^{2L+2})= 2^{1/2}(L+2)^{1/2}\cdot \frac{1}{2} S(z^{L/2}+\ldots + z^{L})\\
%&= 2^{1/2}(L+2)^{1/2}\cdot \frac{1}{2}\left(a_{0,0}(z^{L/2}+\ldots +z^{L-1})+(a_{L,0}+\ldots a_{L,L-1}z^{L-1}+a_{0,0}z^L)\right).
%\end{align*}
By induction, we have shown that $\fR^\prime =\bC I$. Then $\fR=B(\cH)$ and the theorem follows.
\end{proof}

\begin{proof} [Proof of Corollary \ref{cor_RH_Lkast_invariant}]
Suppose that Riemann Hypothesis is true. Then $\cH_0=\cH$ by Theorem 0. It follows that $\cH_0$ is invariant under $T_k^\ast$ for all $k\in \bN$. Conversely, suppose that $\cH_0$ is $T_k^\ast$-invariant for all $k\in \bN$. By Lemma \ref{lem_Tk_sm}, the subspace $\cH_0$ is also invariant under $T_k$ $(k\in \bN)$. Then $\cH_0$ is invariant under any operator in the von-Neumann algebra generated by $T_\bN$. Combining Theorem \ref{thm_TN_generate_BH}, we conclude that $\cH_0=\cH$. Then Riemann hypothesis follows from Theorem 0.
\end{proof}

%Question: Let $p\in \cP$. Is the von Neumann algebra generated by $\{L_k:\,k\in \bN\}$ and $\{L _k^\ast:\, k\in \cP\setminus \{p\}\}$ the whole $B(\cH)$?

It follows from \eqref{eq_T_k_ast_z_n} that, for any given $k\in \bN$ and $m\in \bN_0$, the Hilbert subspace spanned by $\{(T_k^\ast)^l z^m: \, l\in \bN_0\}$ has finite dimension. Furthermore, for any given $n\in \bN_0$,
\[
\text{Span}\{z^m:\, m=0,1,2,\ldots,n\}
\]
is a common invariant subspace of $T_k^\ast$ $(k\in \bN)$.

Moreover, the function $\beta(z)=\frac{1}{1-z}$ has Taylor expansion $\beta(z)=\sum\limits_{n=0}^\infty z^n$. And $\|\beta\|_2=2$. By \eqref{eq_T_k_ast_f}, we have
\begin{align*}
(T_k^\ast \beta)(z) &= k^{1/2}\sum\limits_{m=0}^\infty \left(\sum\limits_{n=km+k-1}^{km+2k-2}\frac{1}{(n+1)(n+2)} \right)(m+1)(m+2)z^m \\
&= k^{-1/2} \sum\limits_{m=0}^\infty z^m = k^{-1/2}\beta(z).
\end{align*}
So $\beta$ is a common eigenfunction of $T_k^\ast$ for all $k\in \bN$.

Finally, let us suppose that $f(z)=\sum\nolimits_{n=l}^\infty a_n z^n$ with $l\geq 0$ and $a_l\neq 0$. Consider $k\in \bN$ with $k\geq 2$. It is immediate that $T_k f\neq 0$. Since $(T_k f)(z) = \sum\nolimits_{n=kl+k-1}^\infty b_n z^n$ for some coefficients $b_n$, one sees that $(T_k f)(z)\neq \lambda f(z)$ for all $\lambda\neq 0$. Therefore, the operators $T_k$ $(k\geq 2)$ do not have common eigenfunctions.

%Try frame for $\{s_k\}$

%or framing for the Banach algebra generated by $\{L_k\}$

%dilations?

%non-commutative measures? quantum measures?

\bigskip

\textbf{Acknowledgements.} Part of the research began when the author was a postdoc in Academy of Mathematics and Systems Science, Chinese Academy of Sciences. The author is deeply grateful to his supervisor Professor Liming Ge. The author would also like to thank Fei Wei and Jingyang Li for helpful discussions. This work is supported by National Natural Science Foundation of China (Grant No. 11701549).

%%%%%%%%%%%%%%%%%%%%%%%%%%%%%%%%%%%%%%%%%%%%%%%%%%%%%%%%%
%%%%%%%%%%%%%%%%%%%% Bibliography %%%%%%%%%%%%%%%%%%%%%%%
%%%%%%%%%%%%%%%%%%%%%%%%%%%%%%%%%%%%%%%%%%%%%%%%%%%%%%%%%

%\newpage
%\ \thispagestyle{empty}
%%%%%%%%%%%%%%%%%%%%%%%%%%%%%%%%%%%%%%%%%%%%%%%%%%%%%%%%%%%%%%%%%%%%%%%%%%%%%%%%%%%%%%%%
%%%%%%%%%%%%%%%%%%%%%%%%%%%%%%%%%%%%%%%%%%%%%%%%%%%%%%%%%%%%%%%%%%%%%%%%%%%%%%%%%%%%%%%%
%%%%%%%%%%%%%%%%%%%%%%%%%%%%% Publications %%%%%%%%%%%%%%%%%%%%%%%%%%%%%%%%%%%%%%%%%%%%%
%%%%%%%%%%%%%%%%%%%%%%%%%%%%%%%%%%%%%%%%%%%%%%%%%%%%%%%%%%%%%%%%%%%%%%%%%%%%%%%%%%%%%%%%
%%%%%%%%%%%%%%%%%%%%%%%%%%%%%%%%%%%%%%%%%%%%%%%%%%%%%%%%%%%%%%%%%%%%%%%%%%%%%%%%%%%%%%%%

\end{document}